\newtheorem{theorem}{Theorem}[section]
\newtheorem{lemma}[theorem]{Lemma}
\theoremstyle{definition}
\numberwithin{equation}{section}
\begin{document}

\title[Note on a theorem of Bousfield and Friedlander]
{Note on a theorem of Bousfield and Friedlander}
\author{Alexandru E. Stanculescu}
\address{Department of Mathematics and Statistics, McGill University, 805 Sherbrooke Str. West, Montr\'eal,
Qu\'ebec, Canada, H3A 2K6} \email{stanculescu@math.mcgill.ca}


\begin{abstract}
We examine the proof of a classical localization theorem of
Bousfield and Friedlander and we remove the assumption that the
underlying model category be right proper. The key to the argument
is a lemma about factoring in morphisms in the arrow category of a
model category.
\end{abstract}

\maketitle
\section{Introduction}
Let $\mathcal{C}$ be a (Quillen) model category. A
(\emph{left})\emph{Bousfield localization of} $\mathcal{C}$ is
another model category structure on $\mathcal{C}$ having the same
class of cofibrations as the given one and a bigger class of weak
equivalences. There are several methods for constructing left
Bousfield localizations for (some classes of model categories)
$\mathcal{C}$, see e.g. \cite{GJ} and the references therein.

In their work on the construction of the stable homotopy category,
Bousfield and Friendlander introduced (\cite{BF}, Thm. A.7) a method
of localization involving an endofunctor $Q:\mathcal{C}\rightarrow
\mathcal{C}$ with good enough properties. Later on, Bousfield
(\cite{Bou}, Thm. 9.3 and Remark 9.5) improved the result by
weakening the hypotheses on $\mathcal{C}$ and refining the axioms
that $Q$ has to satisfy.

The purpose of this note is to further remove one of the hypotheses
of the Bousfield's version of the original Bousfield and Friedlander
theorem. The details are as follows. Let $\mathcal{C}$ be a model
category together with a functor $Q:\mathcal{C}\rightarrow
\mathcal{C}$. We say that a map $f$ of $\mathcal{C}$ is a
$Q$-\emph{equivalence} if $Q(f)$ is a weak equivalence, and we say
that a map is a $Q$-\emph{fibration} if it has the right lifting
property with respect to all the cofibrations of $\mathcal{C}$ which
are $Q$-equivalences. An object $X$ of $\mathcal{C}$ is
$Q$-\emph{fibrant} if the map $X\rightarrow 1$ is a $Q$-fibration.
Here $1$ denotes the terminal object of $\mathcal{C}$. We prove

\begin{theorem} Let $\mathcal{C}$ be a model category and let
$\gamma:\mathcal{C}\rightarrow Ho(\mathcal{C})$ be the localization
functor. Suppose that there are a functor $Q:\mathcal{C}\rightarrow
\mathcal{C}$ and a natural transformation $\alpha:Id\Rightarrow Q$
satisfying the following properties:

(A1) the functor $Q$ preserves weak equivalences;

(A2) for each $X\in \mathcal{C}$, the map $Q(\alpha_{X})$ is a weak
equivalence and the map $\gamma(\alpha_{Q(X)})$ is a monomorphism.

(A3) $Q$-equivalences are stable under pullbacks along fibrations
between fibrant objects $f:X\rightarrow Y$ such that $\alpha_{X}$
and $\alpha_{Y}$ are weak equivalences.

Then $\mathcal{C}$ admits a left Bousfield localization with the
class of $Q$-equivalences as weak equivalences.
\end{theorem}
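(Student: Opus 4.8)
The plan is to produce the desired localization as the model structure on $\mathcal{C}$ whose cofibrations are the cofibrations of $\mathcal{C}$, whose weak equivalences are the $Q$-equivalences, and whose fibrations are the $Q$-fibrations, and then to verify the model category axioms in turn. Bicompleteness is inherited from $\mathcal{C}$. Two-out-of-three for $Q$-equivalences is immediate from functoriality of $Q$ together with two-out-of-three in $\mathcal{C}$, and closure under retracts is equally formal: cofibrations are unchanged, a retract of a $Q$-equivalence maps under $Q$ to a retract of a weak equivalence, and $Q$-fibrations are defined by a lifting property. One of the two lifting axioms, namely that a cofibration which is a $Q$-equivalence lifts against a $Q$-fibration, holds by the very definition of a $Q$-fibration.

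Next I would identify the maps that are simultaneously $Q$-fibrations and $Q$-equivalences. On one side, by (A1) every trivial fibration of $\mathcal{C}$ is a $Q$-equivalence, and, having the right lifting property against all cofibrations, it is in particular a $Q$-fibration. On the other side, if $p$ is at once a $Q$-fibration and a $Q$-equivalence, I would factor it in $\mathcal{C}$ as a cofibration $j$ followed by a trivial fibration $q$; then $q$ is a $Q$-equivalence by (A1), so $j$ is one by two-out-of-three, hence $j$ lifts against $p$, and the standard retract argument exhibits $p$ as a retract of $q$, so $p$ is a trivial fibration of $\mathcal{C}$. Thus the $Q$-fibrations that are $Q$-equivalences are exactly the trivial fibrations of $\mathcal{C}$; this yields the second lifting axiom, since such maps lift against all cofibrations, and it yields the factorization of any map as a cofibration followed by a $Q$-fibration that is a $Q$-equivalence, which is simply the (cofibration, trivial fibration) factorization of $\mathcal{C}$ together with one more application of (A1).

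The substantial point that then remains is the factorization of an arbitrary $f\colon X\to Y$ as a cofibration that is a $Q$-equivalence followed by a $Q$-fibration, and here I would follow the classical strategy of Bousfield and Friedlander: replace $X$, over $Y$, by a homotopy pullback built from the naturality square of $\alpha$ at $f$, that is, from $\alpha_X\colon X\to QX$, $\alpha_Y\colon Y\to QY$ and $Q(f)$. Concretely, using the (trivial cofibration, fibration) factorization of $\mathcal{C}$ and (A1) I would first reduce to the case where $f$ is a fibration of $\mathcal{C}$; then factor $Q(f)$ as a trivial cofibration $QX\to W$ followed by a fibration $W\to QY$; form the pullback $P=Y\times_{QY}W$, so that $P\to Y$ is a fibration and $P\to W$ is the base change of $\alpha_Y$ along $W\to QY$; let $X\to P$ be the map induced by $f$ and by the composite of $\alpha_X$ with $QX\to W$; and finally factor $X\to P$ as a cofibration $j\colon X\to Z$ followed by a trivial fibration $Z\to P$. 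Then $f$ is the composite of $j$ with the fibration $Z\to P\to Y$, and it remains to check that $j$ is a $Q$-equivalence and that $Z\to Y$ is a $Q$-fibration.

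The composite $X\to P\to W$ equals $\alpha_X$ followed by the weak equivalence $QX\to W$, hence is a $Q$-equivalence by the first part of (A2), so $j$ is a $Q$-equivalence as soon as $P\to W$ is, and $P\to W$ is the pullback of the $Q$-equivalence $\alpha_Y$ along the fibration $W\to QY$; likewise, since the trivial fibration $Z\to P$ is already a $Q$-fibration, it suffices to show that $P\to Y$ is a $Q$-fibration, which again reduces to preservation of $Q$-equivalences under a pullback of the same shape. In both places one wants to invoke (A3); but (A3) only licenses pullbacks along fibrations between fibrant objects on which $\alpha$ is a weak equivalence, and $W\to QY$ need not be of that form. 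Bridging this gap — replacing the square $W\to QY$, coherently with the maps into it, by one of the admissible type without disturbing the $Q$-homotopical content, with the monomorphism clause of (A2) guaranteeing that this replacement is a $Q$-equivalence — is precisely the purpose of the lemma on factoring morphisms in the arrow category of $\mathcal{C}$, and I expect this to be the main obstacle: everything before it is formal, and it is exactly at this step that the right-properness hypothesis of the original theorem is dispensed with.
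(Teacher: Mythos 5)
Your formal scaffolding is correct and matches the paper: two-out-of-three and retract closure for $Q$-equivalences, the identification of the maps that are both $Q$-fibrations and $Q$-equivalences with the trivial fibrations of $\mathcal{C}$ (your retract argument is exactly the paper's Lemma 2.2, quoted from Goerss--Jardine), one lifting axiom by definition, the other lifting axiom and the (cofibration, trivial $Q$-fibration) factorization from that identification. The gap is that the one remaining axiom --- factoring $f$ as a cofibration $Q$-equivalence followed by a $Q$-fibration --- is precisely where all the content of the theorem lives, and your proposal stops at its threshold: you name ``the lemma on factoring morphisms in the arrow category'' as the missing ingredient and declare it the main obstacle, but you neither state nor prove it. In the paper this is Lemma 2.3, a lifting statement for a commutative cube whose top face is a cofibration against a fibration between fibrant objects and whose vertical maps are weak equivalences; it is proved by a pullback-and-refactorization argument that deliberately avoids right properness, and it feeds Lemma 2.4 (a cofibration is a $Q$-equivalence iff it lifts against fibrations between fibrant objects lying in $\mathcal{K}$) and Lemma 2.5 (such fibrations are $Q$-fibrations). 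Without these, the factorization cannot be completed.

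There is also a misdiagnosis worth flagging. You assert that showing $P\to Y$ is a $Q$-fibration ``again reduces to preservation of $Q$-equivalences under a pullback of the same shape,'' i.e.\ to (A3). It does not: $Q$-fibrations are defined by a right lifting property, so to certify the second factor you must show that every cofibration $Q$-equivalence lifts against a fibration between fibrant objects on which $\alpha$ is invertible --- that is Lemma 2.4 via the cube lemma, and it is here (together with the monomorphism clause of (A2), used in Lemma 2.1 to control $\mathcal{K}$ and in Lemma 2.5 to characterize $Q$-fibrant objects) that the real work happens; then $Q$-fibrations pull back for free. Axiom (A3) is used only for the other half, namely that the comparison map out of $X$ into the pullback is a $Q$-equivalence. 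A further small repair: you factor $Q(f)$ as a trivial cofibration followed by a fibration $W\to Q(Y)$, but $Q(Y)$ need not be fibrant, so this square could never be brought into the scope of Lemma 2.4 or (A3); the paper instead performs the fibrant replacement in the arrow category first, producing a fibration $\widehat{Q(X)}\to\widehat{Q(Y)}$ between fibrant objects of $\mathcal{K}$ before pulling back along $Y\to\widehat{Q(Y)}$.
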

The theorem differs from (\cite{Bou}, Thm. 9.3) to the amount that
it doesn't require $\mathcal{C}$ to be right proper. (The resulting
model structure will be right proper because of (A3).) Its proof is
a modification of the proofs given in (\cite{GJ}, Thm. X.4.1) and
(\cite{Bou}, Thm. 9.3). It will be given is section 2 after few
lemmas.
\\

\emph{Note.} The published version of this paper \cite{St} contains
a small mistake: the proof of lemma 2.1($ii$) is wrong. We give here
a correct proof.

\section{Proof of theorem 1.1}
The setting in which we shall work for the next lemmas is the
following. $\mathcal{C}$ is a model category with localization
functor $\gamma:\mathcal{C}\rightarrow Ho(\mathcal{C})$. We are
given a functor $Q:\mathcal{C}\rightarrow \mathcal{C}$ and a natural
transformation $\alpha:Id\Rightarrow Q$ satisfying the following
properties:

(A1) the functor $Q$ preserves weak equivalences;

(A2) for each $X\in \mathcal{C}$, the map $Q(\alpha_{X})$ is a weak
equivalence and the map $\gamma(\alpha_{Q(X)})$ is a monomorphism.

\begin{lemma} Let $\mathcal{K}:=\{ X\in \mathcal{C}\ |\ \alpha_{X}$
$is \ an \ isomorphism \ in \ Ho(\mathcal{C})\}$. We view
$\mathcal{K}$ as a full subcategory of $Ho(\mathcal{C})$. Then

(i) $Q(X)\in \mathcal{K}$ for all $X\in \mathcal{C}$;

(ii) $1\in \mathcal{K}$;

(iii) $\mathcal{K}$ is replete in $Ho(\mathcal{C})$;

(iv) the maps $\gamma(Q(\alpha_{X}))$ and $\gamma(\alpha_{Q(X)})$
are equal.
\end{lemma}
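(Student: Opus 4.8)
The plan is to deduce everything from three facts valid for every object $Z$ of $\mathcal{C}$: by (A1) and (A2) the morphism $\gamma(Q(\alpha_{Z}))$ is an isomorphism in $Ho(\mathcal{C})$ and $\gamma(\alpha_{Q(Z)})$ is a monomorphism, and by (A1) the morphism $Q(Q(\alpha_{Z}))$ is a weak equivalence, so $\gamma(Q(Q(\alpha_{Z})))$ is an isomorphism. I would prove (iv) and (i) simultaneously, then (iii), and lastly (ii); the last is the only non-formal step.

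For (iv) and (i), fix $X$ and apply $\gamma$ to the naturality squares of $\alpha$ at the three morphisms $\alpha_{X}$, $\alpha_{Q(X)}$ and $Q(\alpha_{X})$, and also apply $Q$ and then $\gamma$ to the square at $\alpha_{X}$. Write $u=\gamma(Q(\alpha_{X}))$, an isomorphism, and $b=\gamma(\alpha_{Q(X)})$, a monomorphism. Applying $Q$ to the square at $\alpha_{X}$ and cancelling the isomorphism $u$ gives $\gamma(Q(\alpha_{Q(X)}))=\gamma(Q(Q(\alpha_{X})))$; the squares at $\alpha_{Q(X)}$ and at $Q(\alpha_{X})$ then express $\gamma(\alpha_{Q(Q(X))})$ in two ways and, after cancelling the isomorphism $\gamma(Q(Q(\alpha_{X})))$, yield $b\circ u^{-1}\circ b=b$. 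Since $b$ is a monomorphism this forces $u^{-1}\circ b=\mathrm{id}$, i.e. $\gamma(\alpha_{Q(X)})=\gamma(Q(\alpha_{X}))$; this is (iv), and as the right-hand side is an isomorphism it is also (i).

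For (iii): $Q$ preserves weak equivalences, so it induces $\bar{Q}\colon Ho(\mathcal{C})\to Ho(\mathcal{C})$ with $\gamma Q=\bar{Q}\gamma$, and $\alpha$ induces a natural transformation $\bar{\alpha}\colon\mathrm{Id}\Rightarrow\bar{Q}$ on $Ho(\mathcal{C})$ with $\bar{\alpha}_{X}=\gamma(\alpha_{X})$; hence $\mathcal{K}$ is exactly the full subcategory of objects on which $\bar{\alpha}$ is invertible, and such a subcategory is always replete: if $\theta\colon X\to Y$ is an isomorphism of $Ho(\mathcal{C})$ with $X\in\mathcal{K}$, then naturality gives $\gamma(\alpha_{Y})=\bar{Q}(\theta)\circ\gamma(\alpha_{X})\circ\theta^{-1}$, a composite of isomorphisms, so $Y\in\mathcal{K}$.

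For (ii), the crux: first observe that $\gamma(1)$ is a terminal object of $Ho(\mathcal{C})$, since $1$ is fibrant and terminal in $\mathcal{C}$, so each $Ho(\mathcal{C})(X,1)$ is a singleton. Let $r\colon Q(1)\to 1$ be the unique morphism. Then $r\circ\alpha_{1}=\mathrm{id}_{1}$, hence $Q(r)\circ Q(\alpha_{1})=\mathrm{id}_{Q(1)}$, and since $Q(\alpha_{1})$ is a weak equivalence by (A2), two out of three shows $Q(r)$ is one too. Applying $\gamma$ to the naturality square of $\alpha$ at $r$ gives $\gamma(\alpha_{1})\circ\gamma(r)=\gamma(Q(r))\circ\gamma(\alpha_{Q(1)})$, which is an isomorphism because $\gamma(Q(r))$ is one and, by (i), $Q(1)\in\mathcal{K}$ so $\gamma(\alpha_{Q(1)})$ is one; on the other hand $\gamma(r)\circ\gamma(\alpha_{1})=\mathrm{id}_{\gamma(1)}$. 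Therefore $\gamma(\alpha_{1})$ has a left inverse, and since $\gamma(\alpha_{1})\circ\gamma(r)$ is invertible it also has a right inverse, so it is an isomorphism, i.e. $1\in\mathcal{K}$.

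I expect (ii) to be the main obstacle: it is not a formal diagram chase but genuinely uses the terminal object of $Ho(\mathcal{C})$ together with part (i), and it is important that the monomorphism clause of (A2) — the ingredient that drives the cancellation in (iv)/(i), and thereby, through (i), also (ii) — is available; this is presumably the point at which the published proof went astray.
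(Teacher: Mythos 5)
Your proof is correct and follows essentially the same route as the paper's: you establish (iv) first by cancelling against the monomorphism $\gamma(\alpha_{Q(X)})$ and the isomorphism $\gamma(Q(\alpha_{X}))$, deduce (i) from it, and prove (ii) by exhibiting $\gamma(\alpha_{1})$ as, in effect, a retract of the isomorphism $\gamma(\alpha_{Q(1)})$ via the unique map $Q(1)\rightarrow 1$. The only cosmetic difference is that in (iv) you run the computation with naturality squares of $\alpha$ in $\mathcal{C}$ pushed into $Ho(\mathcal{C})$, whereas the paper packages the same cancellation through the induced functor $\hat{Q}$ and transformation $\hat{\alpha}$ on $Ho(\mathcal{C})$.
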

\begin{proof} ($i$) and ($iii$) are clear. For ($ii$), notice that $\alpha_{1}$
is a retract of $\alpha_{Q(1)}$
\[
   \xymatrix{
1 \ar[r]^{\alpha_{1}} \ar[d]_{\alpha_{1}} & Q(1)
\ar[d]_{\alpha_{Q(1)}}
\ar[r] & 1 \ar[d]_{\alpha_{1}}\\
Q(1) \ar[r]_{Q(\alpha_{1})} & Q(Q(1)) \ar[r] & Q(1)\\
}
   \]
and use ($iv$). We now prove ($iv$). By general theory there are:
($a$) a functor $\hat{Q}:Ho(\mathcal{C})\rightarrow Ho(\mathcal{C})$
such that $\hat{Q}\gamma=\gamma Q$, and ($b$) a natural
transformation $\hat{\alpha}:Id\Rightarrow \hat{Q}$ such that
$\hat{\alpha}\gamma=\gamma \alpha$. Let $X$ be an object of
$\mathcal{C}$. We have a commutative diagram
\[
   \xymatrix{
\gamma X \ar[r]^{\gamma \alpha_{X}} \ar[d]_{\gamma \alpha_{X}} &
\gamma Q(X) \ar[d]^{\gamma Q(\alpha_{X})}\\
\gamma Q(X) \ar[r]_{\gamma(\alpha_{Q(X)})} & \gamma Q(Q(X)).\\
}
   \]
Let $g:=\gamma(\alpha_{Q(X)})$, $f:=\gamma Q(\alpha_{X})$ and
$u:=f^{-1}g$. Then $u\hat{\alpha}_{\gamma X}=\hat{\alpha}_{\gamma
X}$, hence $\hat{Q}(u)\hat{Q}(\hat{\alpha}_{\gamma
X})=\hat{Q}(\hat{\alpha}_{\gamma X})$, which implies that
$\hat{Q}(u)$ is the identity map. The commutative diagram
\[
   \xymatrix{
\hat{Q}(\gamma X) \ar[r]^{\hat{\alpha}_{\hat{Q}\gamma X}}
 \ar[d]_{u} &
\hat{Q}^{2}(\gamma X) \ar[d]^{\hat{Q}(u)}\\
\hat{Q}(\gamma X) \ar[r]_{\hat{\alpha}_{\hat{Q}\gamma X}} & \hat{Q}^{2}(\gamma X)\\
}
   \]
implies then that $u$ is the identity, and therefore the maps
$\gamma(Q(\alpha_{X}))$ and $\gamma(\alpha_{Q(X)})$ are equal.
\end{proof}
\begin{lemma} A map of $\mathcal{C}$ is a trivial fibration
iff it is a $Q$-fibration and a $Q$-equivalence.
\end{lemma}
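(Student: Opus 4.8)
The plan is to prove the two implications separately, using the standard retract-argument machinery available in any model category together with the hypotheses (A1)--(A2) (and Lemma 2.1). First I would treat the easy direction: suppose $f$ is a trivial fibration of $\mathcal{C}$. Then $f$ has the right lifting property with respect to all cofibrations, in particular with respect to the cofibrations that happen to be $Q$-equivalences, so $f$ is a $Q$-fibration by definition. Moreover $f$ is a weak equivalence, hence by (A1) the functor $Q$ preserves it, so $Q(f)$ is a weak equivalence and $f$ is a $Q$-equivalence. This direction needs nothing beyond the definitions and (A1).

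For the converse, suppose $f\colon X\to Y$ is both a $Q$-fibration and a $Q$-equivalence; I want to show $f$ is a trivial fibration, i.e.\ that $f$ has the right lifting property against \emph{all} cofibrations. The standard move is to factor $f$ as $f = p\circ i$ where $i\colon X\to Z$ is a cofibration and $p\colon Z\to Y$ is a trivial fibration. The first key step is to observe that $i$ is a $Q$-equivalence: indeed $p$ is a trivial fibration, hence a weak equivalence, hence a $Q$-equivalence by (A1), and since $f$ is a $Q$-equivalence the two-out-of-three property for $Q$-equivalences (which holds because weak equivalences in $\mathcal{C}$ have two-out-of-three and $Q$ is a functor) forces $i$ to be a $Q$-equivalence. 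So $i$ is a cofibration \emph{and} a $Q$-equivalence. Now because $f$ is a $Q$-fibration, it has the right lifting property with respect to $i$; this yields, by the usual retract argument applied to the square whose top is $i$ and whose composite down-and-across is $f$ versus the identity of $Y$, that $f$ is a retract of $p$. Since trivial fibrations are closed under retracts, $f$ is a trivial fibration.

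The point I would flag as requiring a little care, rather than being a genuine obstacle, is checking that $Q$-equivalences satisfy two-out-of-three: this is immediate from (A1) only in the sense that $Q$ is a functor and weak equivalences of $\mathcal{C}$ satisfy two-out-of-three, so applying $Q$ to $f = p\circ i$ gives $Q(f) = Q(p)\circ Q(i)$ and one reads off the conclusion. One should also be slightly careful about the direction of the retract diagram: the lift $r\colon Z\to X$ with $ri = \mathrm{id}_X$ and $fr = p$ exhibits $f$ as a retract of $p$ in the arrow category, and it is here — not in any homotopy-theoretic input — that the $Q$-fibration hypothesis on $f$ is used. No properness assumption and nothing from Lemma 2.1 beyond the ambient setup is needed for this particular lemma; the subtler hypothesis (A3) and the monomorphism part of (A2) enter only in later lemmas.
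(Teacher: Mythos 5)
Your proof is correct, and it is exactly the standard retract argument that the paper invokes by simply citing Goerss--Jardine, Lemma X.4.3: the easy direction from the definitions and (A1), and the converse by factoring $f=pi$, using two-out-of-three (via (A1) and functoriality of $Q$) to see $i$ is a cofibration $Q$-equivalence, lifting against it, and concluding that $f$ is a retract of the trivial fibration $p$. No divergence from the intended argument.
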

\begin{proof} This is (\cite{GJ}, Lemma X.4.3).
\end{proof}
\begin{lemma}
Let
\[
   \xymatrix{
A \ar[rr] \ar[dr]^{i} \ar[dd] & &
X \ar[dr]^{f} \ar[dd]|-{u}\\
&  B \ar[rr] \ar[dd] & & Y \ar[dd]|-{v}\\
A' \ar[rr] \ar[dr]^{i'} &  &
X' \ar[dr]\\
&  B' \ar[rr] & & Y'\\
 }
   \]
be a (commutative) cube diagram in a model category $\mathcal{E}$.
Suppose that $i$ is a cofibration, $f$ is a fibration between
fibrant objects and $i'$, $u$ and $v$ are weak equivalences. Then
the top face of the cube has a diagonal filler.
\end{lemma}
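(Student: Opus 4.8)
The plan is to build the diagonal filler $\tilde b\colon B\to X$ of the top face directly, by a chain of liftings, using the cube only through the weak equivalence $i'$ and the two weak equivalences $u,v$. I would deliberately stay out of $Ho(\mathcal E)$: the top square does acquire a filler in $Ho(\mathcal E)$, but since $B$ is not assumed cofibrant such a filler cannot be rigidified in general. Besides the model category axioms the only non‑formal input is the familiar fact, valid in every model category, that \emph{the pullback of a weak equivalence between fibrant objects along a fibration is a weak equivalence}. The step I expect to be the main obstacle is where the two halves of the argument meet: the bottom face carries the single piece of extra data (that $i'$ is a weak equivalence), yet its corners $X',Y'$ need not be fibrant and $X'\to Y'$ need not be a fibration, so no lift against it is legitimate as it stands. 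The first step below launders the bottom into a fibration between fibrant objects, and the pullback fact then carries the laundered lift back up to $X$; this is exactly the fragment of right properness that survives on fibrant objects, and it is what replaces the right‑properness hypothesis of \cite{Bou}.

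\emph{Laundering the bottom of the cube.} Write $f'$ for the map $X'\to Y'$. Factor $Y'\to 1$ as a trivial cofibration $w_Y\colon Y'\to\tilde Y$ with $\tilde Y$ fibrant, then factor $w_Yf'\colon X'\to\tilde Y$ as a trivial cofibration $w_X\colon X'\to\tilde X$ followed by a fibration $\tilde f\colon\tilde X\to\tilde Y$; thus $\tilde X$ is fibrant and $\tilde fw_X=w_Yf'$. Composing with $u,v$ and using the right face of the cube produces a commutative square
\[
   \xymatrix{
X \ar[r]^{w_Xu} \ar[d]_{f} & \tilde X \ar[d]^{\tilde f}\\
Y \ar[r]_{w_Yv} & \tilde Y
   }
\]
in which both horizontal maps are weak equivalences between fibrant objects and both vertical maps are fibrations.

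\emph{Transporting the lift from the bottom face.} Since $i'$ is a weak equivalence, factor it as a trivial cofibration $j'\colon A'\to C$ followed by a trivial fibration $r'\colon C\to B'$. Lifting $w_X\circ(A'\to X')$ along $j'$ against $\tilde f$ (the relevant square commutes by the bottom face) yields $\beta\colon C\to\tilde X$ with $\beta j'=w_X\circ(A'\to X')$ and $\tilde f\beta=w_Y\circ(B'\to Y')\circ r'$. Lifting $A\to A'\xrightarrow{j'}C$ along the cofibration $i$ against the trivial fibration $r'$ (the square commutes by the left face) yields $s\colon B\to C$ with $si=j'\circ(A\to A')$ and $r's=(B\to B')$. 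Using the back and front faces of the cube, the composite $\beta s\colon B\to\tilde X$ then satisfies $(\beta s)i=(w_Xu)a$ and $\tilde f(\beta s)=(w_Yv)b$.

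\emph{Assembling, then correcting.} Form the pullback $W:=Y\times_{\tilde Y}\tilde X$ of $\tilde f$ along $w_Yv$; then $W\to Y$ is a fibration (so $W$ is fibrant), and by the pullback fact above the projection $W\to\tilde X$ is a weak equivalence. The pair $(f,w_Xu)$ induces $w\colon X\to W$ with $\mathrm{pr}_Y w=f$ and $\mathrm{pr}_{\tilde X}w=w_Xu$; the latter being a weak equivalence, so is $w$. The pair $(b,\beta s)$ induces $\omega\colon B\to W$ with $\mathrm{pr}_Y\omega=b$, and the identities above give $\omega i=wa$. Now factor $w=X\xrightarrow{k}V\xrightarrow{r}W$ with $k$ a trivial cofibration and $r$ a (necessarily trivial) fibration. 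Lift $\omega$ along $i$ against $r$ to get $\ell\colon B\to V$ with $\ell i=ka$ and $r\ell=\omega$; and lift $\mathrm{id}_X$ along the trivial cofibration $k$ against $f$, over $\mathrm{pr}_Y r\colon V\to Y$, to get a retraction $\pi\colon V\to X$ with $\pi k=\mathrm{id}_X$ and $f\pi=\mathrm{pr}_Y r$. Then $\tilde b:=\pi\ell$ satisfies $\tilde bi=\pi ka=a$ and $f\tilde b=\mathrm{pr}_Y r\ell=\mathrm{pr}_Y\omega=b$, so it is the required diagonal filler of the top face.
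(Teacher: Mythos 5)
Your proof is correct, and it runs on the same fuel as the paper's: fibrant/fibration replacement of the bottom edge $X'\to Y'$, the fact that pulling back a weak equivalence between fibrant objects along a fibration gives a weak equivalence, and a final factor-and-lift step. The middle of the argument, however, is organized differently. The paper replaces the \emph{entire} bottom face by a square of fibrations between fibrant objects (factoring $B'\to\widehat{Y'}$ through $\widehat{B'}$, forming the pullback $P=\widehat{B'}\times_{\widehat{Y'}}\widehat{X'}$, and factoring $A'\to P$ through $\widehat{A'}$), then composes the two cubes and pulls back both the front and back faces along $X\to\widehat{X'}$ and $Y\to\widehat{Y'}$; the comparison map $p\colon \widehat{A'}\times_{\widehat{X'}}X\to \widehat{B'}\times_{\widehat{Y'}}Y$ is a weak equivalence by the same pullback fact plus two-out-of-three, and the lift is extracted from a factorization of $p$. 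You sidestep the replacement of $A'\to B'$ entirely: by factoring $i'$ as a trivial cofibration followed by a trivial fibration and performing two liftings, you manufacture the map $\beta s\colon B\to\tilde X$ by hand, so that only the single pullback $W=Y\times_{\tilde Y}\tilde X$ is needed, and the weak equivalence to be factored is $w\colon X\to W$ rather than a map between two auxiliary pullbacks. Your closing step (lift $i$ against the trivial fibration part of $w$, then retract $V$ onto $X$ over $Y$ using the trivial cofibration part against $f$) is the same device as the paper's final two liftings. The net effect is a leaner construction --- one pullback instead of three, no auxiliary objects $\widehat{A'},\widehat{B'}$ --- at the cost of the slightly more delicate bookkeeping in the ``transporting the lift'' step; all the squares you lift against do commute, and each lifting is against a legitimate (trivial cofibration, fibration) or (cofibration, trivial fibration) pair, so the argument goes through.
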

\begin{proof}
Consider the diagram
\[
   \xymatrix{
A' \ar[r] \ar[d]_{i'} & X' \ar[r]^{u'} \ar[d]
 & \widehat{X'} \ar[d]^{q}\\
B' \ar[r] & Y' \ar[r]^{v'} & \widehat{Y'}\\
}
   \]
where $u'$ and $v'$ are trivial cofibrations and $q$ is a fibration
between fibrant objects. We factor the composite map $B'\rightarrow
\widehat{Y'}$ as a trivial cofibration $B'\rightarrow \widehat{B'}$
followed by a fibration $\widehat{B'}\rightarrow \widehat{Y'}$ and
then take the pullback $P$ of the diagram
\[
   \xymatrix{
& \widehat{X'} \ar[d]^{q}\\
\widehat{B'} \ar[r] & \widehat{Y'}.\\
}
   \]
We factor the canonical map $A'\rightarrow P$ as a trivial
cofibration $A'\rightarrow \widehat{A'}$ followed by a fibration
$\widehat{A'}\rightarrow P$ and we obtain a commutative cube
\[
   \xymatrix{
A' \ar[rr] \ar[dr]^{i} \ar[dd] & &
X' \ar[dr] \ar[dd]|-{u'}\\
&  B' \ar[rr] \ar[dd] & & Y' \ar[dd]|-{v'}\\
\widehat{A'} \ar[rr] \ar[dr]^{\hat{i'}} &  &
 \widehat{X'} \ar[dr]\\
&  \widehat{B'} \ar[rr] & & \widehat{Y'}\\
 }
   \]
in which the maps $\widehat{A'}\rightarrow \widehat{X'}$ and
$\widehat{B'}\rightarrow \widehat{Y'}$ are fibrations between
fibrant objects and the map $\widehat{i'}$ is a weak equivalence.
Composing the above cubes and then taking the pullbacks of the front
and back new faces results in a commutative diagram
\[
   \xymatrix{
A \ar[r] \ar[dr]_{i} \ar[dd] & \widehat{A'}\times_{\widehat{X'}}X
\ar[r] \ar[ddl] \ar[dr]^{p} &
X \ar[dr]^{f} \ar[dd]\\
&  B \ar[r] \ar[dd] & \widehat{B'}\times_{\widehat{Y'}}Y \ar[r] \ar[ddl] & Y \ar[dd]\\
\widehat{A'} \ar[rr] \ar[dr]_{i'} &  &
\widehat{X'} \ar[dr]\\
&  \widehat{B'} \ar[rr] & & \widehat{Y'}.\\
 }
   \]
It follows that the map $p$ is a weak equivalence. As such, $p$ has
a factorisation $qj$, where $j$ is a trivial cofibration and $q$ is
a trivial fibration. Since $i$ was a cofibration and $f$ a
fibration, the the top face of the original cube diagram  has a
diagonal filler.
\end{proof}
\begin{lemma} A cofibration of $\mathcal{C}$ is a $Q$-equivalence iff it has the left
lifting property with respect to every fibration between fibrant
objects which belong to $\mathcal{K}$.
\end{lemma}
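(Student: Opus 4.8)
The plan is to treat the two implications separately; the ``only if'' direction drops out of Lemma 2.3, while the ``if'' direction is where the work lies.

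\emph{Forward direction.} Suppose $i\colon A\to B$ is a cofibration and a $Q$-equivalence, let $p\colon X\to Y$ be a fibration between fibrant objects with $X,Y\in\mathcal{K}$, and consider a commutative square with left edge $i$ and right edge $p$; I want a diagonal filler. Applying the functor $\alpha$ to this square produces, by naturality of $\alpha$, a commutative cube whose top face is the given square and whose bottom face has edges $Q(i)$ and $Q(p)$, the remaining four edges being $\alpha_A,\alpha_B,\alpha_X,\alpha_Y$. In the notation of Lemma 2.3 one takes $f=p$, $i'=Q(i)$, $u=\alpha_X$, $v=\alpha_Y$: here $Q(i)$ is a weak equivalence because $i$ is a $Q$-equivalence, and $\alpha_X,\alpha_Y$ are weak equivalences because membership of $X$ and $Y$ in $\mathcal{K}$ says precisely (recall that a map of $\mathcal{C}$ is a weak equivalence iff $\gamma$ inverts it) that $\alpha_X$ and $\alpha_Y$ become isomorphisms in $Ho(\mathcal{C})$. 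Lemma 2.3 then supplies the diagonal filler of the top face, which is the required lift.

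\emph{Converse.} Suppose the cofibration $i\colon A\to B$ has the stated lifting property; I must show $Q(i)$ is a weak equivalence. Fix a functorial fibrant replacement $R$, with natural trivial cofibration $r\colon\mathrm{Id}\Rightarrow R$, and set $\rho:=r_{Q(-)}\circ\alpha\colon\mathrm{Id}\Rightarrow RQ$, so that $\rho_Z\colon Z\to RQ(Z)$ with $RQ(Z)$ fibrant and, by Lemma 2.1(i) and (iii), in $\mathcal{K}$. Factor $RQ(i)$ as a trivial cofibration $\sigma\colon RQ(A)\to E$ followed by a fibration $\pi\colon E\to RQ(B)$; then $E$ is fibrant and, again by Lemma 2.1(iii), lies in $\mathcal{K}$, so $\pi$ is a fibration between fibrant objects of $\mathcal{K}$. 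By naturality of $\rho$, the square with top edge $\sigma\rho_A\colon A\to E$, left edge $i$, right edge $\pi$, and bottom edge $\rho_B\colon B\to RQ(B)$ commutes, so the hypothesis provides $h\colon B\to E$ with $\pi h=\rho_B$ and $hi=\sigma\rho_A$. Now apply $Q$ and $\gamma$. Each $Q(\rho_Z)=Q(r_{Q(Z)})\circ Q(\alpha_Z)$ is a weak equivalence, since $Q(\alpha_Z)$ is one by (A2) and $Q(r_{Q(Z)})$ is one by (A1); similarly $Q(\sigma)$ is a weak equivalence. The relation $hi=\sigma\rho_A$ then makes $\gamma Q(h)\circ\gamma Q(i)$ an isomorphism, so $\gamma Q(i)$ is a split monomorphism; the relation $\pi h=\rho_B$ makes $\gamma Q(\pi)$ a split epimorphism; and applying $\gamma Q$ to the identity $\pi\sigma\rho_A=\rho_B\,i$, in which $\gamma Q(\sigma),\gamma Q(\rho_A),\gamma Q(\rho_B)$ are isomorphisms, shows $\gamma Q(i)$ is a split epimorphism as well. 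A morphism that is both a split monomorphism and a split epimorphism is an isomorphism, so $\gamma Q(i)$ is an isomorphism and $Q(i)$ is a weak equivalence; that is, $i$ is a $Q$-equivalence.

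\emph{Main obstacle.} The converse is the hard part: there is no fibration attached to $i$ at the outset, so one must manufacture a test fibration living in $\mathcal{K}$ --- which is what forces the passage through the fibrant replacement of $Q$ --- and then read a genuine weak equivalence off the single lift $h$. The decisive facts are that $h$ splits $Q(i)$ on the left and splits $\pi$ (hence, up to invertible maps, $Q(i)$) on the right, together with the elementary observation that a morphism possessing both a left and a right inverse is an isomorphism; hypothesis (A2) is used exactly to guarantee that the maps $Q(\rho_Z)$ are weak equivalences.
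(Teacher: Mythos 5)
Your proof is correct and follows essentially the same route as the paper's: the forward direction applies Lemma 2.3 to the cube built from the naturality squares of $\alpha$ (with $i'=Q(i)$, $u=\alpha_X$, $v=\alpha_Y$), and the converse lifts $i$ against a fibrant approximation of $Q(i)$ whose objects lie in $\mathcal{K}$ by repleteness, then concludes that $\gamma Q(i)$ is invertible --- the paper phrases this last step as the two-out-of-six property in $\mathcal{C}$ rather than your split-mono-plus-split-epi argument in $Ho(\mathcal{C})$, which is the same reasoning. The only caveat is your appeal to a \emph{functorial} fibrant replacement $R$, which the Quillen-style definition of model category in force here does not supply; this is harmless, since the commuting square you need can be produced by hand exactly as in the paper, by first replacing $Q(B)$ fibrantly and then factoring the composite $Q(A)\rightarrow \widehat{Q(B)}$ as a trivial cofibration followed by a fibration.
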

\begin{proof}
($\Rightarrow$) Let
\[
   \xymatrix{
A \ar[r] \ar[d]_{i} & X \ar[d]^{f}\\
B \ar[r] & Y\\
}
   \]
be a commutative diagram with $i$ a cofibration $Q$-equivalence and
$f$ a fibration between fibrant objects which belong to
$\mathcal{K}$. Apply the previous lemma to the cube diagram
\[
   \xymatrix{
A \ar[rr] \ar[dr]^{i} \ar[dd] & &
X \ar[dr]^{f} \ar[dd]\\
&  B \ar[rr] \ar[dd] & & Y \ar[dd]\\
Q(A) \ar[rr] \ar[dr]^{Q(i)} &  &
 Q(X) \ar[dr]\\
&  Q(B) \ar[rr] & & Q(Y).\\
 }
 \]
($\Leftarrow$) Let $i:A\rightarrow B$ be a cofibration of
$\mathcal{C}$ which has the left lifting property with respect to
every fibration between fibrant objects which belong to
$\mathcal{K}$. Consider the diagram
\[
   \xymatrix{
A \ar[r]^{\alpha_{A}} \ar[d]_{i} & Q(A) \ar[r]^{u} \ar[d]^{Q(i)}
 & \widehat{Q(A)} \ar[d]^{\widehat{Q(i)}}\\
B \ar[r]^{\alpha_{B}} & Q(Y) \ar[r]^{v} & \widehat{Q(B)}\\
}
   \]
where $u$ and $v$ are trivial cofibrations and $\widehat{Q(i)}$ is a
fibration between fibrant objects. By hypothesis the outer diagram
has a diagonal filler $d$. Applying $Q$ to the previous diagram we
obtain a diagram
\[
   \xymatrix{
A \ar[rr]^{Q(u\alpha_{A})} \ar[d]_{Q(i)} & & Q(\widehat{Q(A)}) \ar[d]^{Q(\widehat{Q(i)})}\\
B \ar[rr]_{Q(v\alpha_{B})} \ar[urr]^{Q(d)} & & Q(\widehat{Q(B)})\\
}
   \]
in which both horizontal arrows are weak equivalences. By the two
out of six property of weak equivalences it follows that $Q(d)$ is a
weak equivalence, hence $i$ is a $Q$-equivalence.
\end{proof}
\begin{lemma}
(i) An object $X$ of $\mathcal{C}$ is $Q$-fibrant iff $X$ is fibrant
and $X\in \mathcal{K}$.

(ii) A map between $Q$-fibrant objects is a $Q$-fibration iff it is
a fibration.
\end{lemma}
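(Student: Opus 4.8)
The plan is to deduce everything from Lemma 2.4 and Lemma 2.1, after one basic observation. First I would record that \emph{every $Q$-fibration is a fibration}: a trivial cofibration is a weak equivalence, hence a $Q$-equivalence by (A1), so a $Q$-fibration has the right lifting property with respect to all trivial cofibrations. In particular, if $X$ is $Q$-fibrant then $X\to 1$ is a fibration, so $X$ is fibrant; this already gives the ``fibrant'' half of the ``only if'' part of ($i$) and the whole ``only if'' part of ($ii$). For ($i$)($\Leftarrow$): if $X$ is fibrant and $X\in\mathcal{K}$, then, since $1$ is fibrant and $1\in\mathcal{K}$ by Lemma 2.1($ii$), the map $X\to 1$ is a fibration between fibrant objects belonging to $\mathcal{K}$; by Lemma 2.4 every cofibration which is a $Q$-equivalence has the left lifting property with respect to it, so $X\to 1$ is a $Q$-fibration, i.e.\ $X$ is $Q$-fibrant. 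For ($ii$)($\Leftarrow$): if $f\colon X\to Y$ is a fibration between $Q$-fibrant objects, then by ($i$) both $X$ and $Y$ are fibrant and lie in $\mathcal{K}$, so $f$ is a fibration between fibrant objects belonging to $\mathcal{K}$, and Lemma 2.4 shows $f$ has the right lifting property with respect to every cofibration $Q$-equivalence, i.e.\ $f$ is a $Q$-fibration.

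The only point requiring work is: \emph{if $X$ is $Q$-fibrant then $X\in\mathcal{K}$}. I would factor $\alpha_X\colon X\to Q(X)$ as a cofibration $j\colon X\to Z$ followed by a trivial fibration $p\colon Z\to Q(X)$. Since $Q(\alpha_X)$ is a weak equivalence by (A2), $Q(p)$ is a weak equivalence by (A1), and $Q(\alpha_X)=Q(p)Q(j)$, the two-out-of-three property forces $Q(j)$ to be a weak equivalence; thus $j$ is a cofibration which is a $Q$-equivalence. Because $X$ is $Q$-fibrant, the commutative square with top edge $\mathrm{id}_X$, left edge $j$, and right edge $X\to 1$ admits a diagonal filler $r\colon Z\to X$ with $rj=\mathrm{id}_X$, so $X$ is a retract of $Z$ in $\mathcal{C}$. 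Now $Q(X)\in\mathcal{K}$ by Lemma 2.1($i$) and $\gamma(p)$ is an isomorphism in $Ho(\mathcal{C})$, so $Z\in\mathcal{K}$ since $\mathcal{K}$ is replete (Lemma 2.1($iii$)). Applying the naturality squares of $\alpha$ to $X\xrightarrow{j}Z\xrightarrow{r}X$ and then $\gamma$ exhibits $\gamma(\alpha_X)$ as a retract of $\gamma(\alpha_Z)$ in the arrow category of $Ho(\mathcal{C})$; a retract of an isomorphism is an isomorphism, so $\gamma(\alpha_X)$ is an isomorphism, i.e.\ $X\in\mathcal{K}$.

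The main obstacle is precisely this last step: one should resist trying to prove that $\alpha_X$ is itself a weak equivalence (it need not be), and instead use the lifting property of the $Q$-fibrant object $X$ against the cofibration $Q$-equivalence $j$ manufactured from the factorization of $\alpha_X$, so as to realize $X$ as an honest retract in $\mathcal{C}$ of an object already known to lie in $\mathcal{K}$. (Alternatively, one can bypass closure of $\mathcal{K}$ under retracts: the maps $r$ and $p$ make $\gamma(\alpha_X)$ a split monomorphism, and then the idempotent $\gamma(\alpha_X)\circ s$, with $s$ the retraction, is seen to be the identity by the argument of Lemma 2.1($iv$), using that $\gamma(\alpha_{Q(X)})$ is a monomorphism.) Once ($i$) is established, ($ii$) follows with no further work.
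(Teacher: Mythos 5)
Your proof is correct and follows essentially the same route as the paper: the paper also factors $\alpha_X$ as a cofibration $Q$-equivalence followed by a trivial fibration, uses the lifting property of the $Q$-fibrant object to exhibit $\alpha_X$ as a retract of $\alpha_D$ with $D\in\mathcal{K}$ by Lemma 2.1, and deduces ($ii$) from ($i$) and Lemma 2.4. Your write-up is if anything slightly more explicit, e.g.\ in recording that every $Q$-fibration is a fibration.
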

\begin{proof}
$(i)$ If $X$ is fibrant and $\alpha_{X}$ is a weak equivalence then
by 2.1 and 2.4 we conclude that $X$ is $Q$-fibrant. Conversely, let
$X$ be $Q$-fibrant. We factor the map $\alpha_{X}$ as $pi$, where
$i:X\rightarrow D$ is a cofibration and $p:D\rightarrow Q(X)$ is a
trivial fibration. Then $i$ is a $Q$-equivalence, so the diagram
\[
   \xymatrix{
  X \ar[r]^{id_{X}} \ar[d]_{i} & X \ar[d]\\
 D \ar[r] & 1\\
}
   \]
has a diagonal filler. Consequently, $\alpha_{X}$ is a retract of
$\alpha_{D}$. But $D\in \mathcal{K}$ by 2.1. Part $(ii)$ follows
from $(i)$ and 2.4.
\end{proof}
\emph{Proof of Theorem 1.1.} Since we have lemma 2.2 it only remains
to show that every arrow $f:X\rightarrow Y$ of $\mathcal{C}$ can be
factored into a cofibration $Q$-equivalence followed by a
$Q$-fibration. The proof follows exactly the proof of (\cite{Bou},
Thm. 9.3) with the difference that we appeal to lemma 2.5. To make
things clear we repeat the argument. Consider the diagram
\[
   \xymatrix{
X \ar[r]^{\alpha_{X}} \ar[d]_{f} & Q(X) \ar[r]^{u} \ar[d]^{Q(f)}
 & \widehat{Q(X)} \ar[d]^{\widehat{Q(f)}}\\
Y \ar[r]^{\alpha_{Y}} & Q(Y) \ar[r]^{v} & \widehat{Q(Y)}\\
}
   \]
where $u$ and $v$ are trivial cofibrations and $\widehat{Q(f)}$ is a
fibration between fibrant objects. The map $\widehat{Q(f)}$ is a
$Q$-fibration by lemma 2.5($ii$). We pull it back along the
$Q$-equivalence $v\alpha_{Y}$ to obtain a $Q$-fibration
$g:E\rightarrow Y$ such that the map $E\rightarrow \widehat{Q(X)}$
is a $Q$-equivalence by (A3). Therefore the canonical map
$X\rightarrow E$ is a $Q$-equivalence. We factor it into a
cofibration $j$ followed by a trivial fibration $p$, and then
$f=(gp)j$ is the desired factorization of $f$.
\\

\textbf{Remark 2.6.} If $\mathcal{C}$ is a combinatorial model
category and $Q$ is an accessible functor, then it follows from
Smith's theorem (\cite{Bek}, Thm. 1.7) that the conclusion of
theorem 1.1 remains valid without imposing the axiom (A3).
\\

\textbf{Acknowledgements.} The result of this paper was obtained
during the author's stay at the CRM Barcelona. We would like to
thank CRM for support and warm hospitality.

\end{document}